\newtheorem{theorem}{Theorem}[section]
\newtheorem{lemma}{Lemma}[section]
\numberwithin{equation}{section}
\begin{document}
\title{{Morse Theory and Geodesics in the Space of K\"ahler Metrics}}
\author{Tam\'as Darvas\thanks{Research supported by NSF grant DMS1162070 and the Purdue Research Foundation.}}
\date{\vspace{0.1in} \emph{\small{To my parents.}}\vspace{-0.2in}}
\maketitle
\begin{abstract}Given a compact K\"ahler manifold $(X,\omega_0)$ let $\mathcal H_{0}$ be the set of K\"ahler forms cohomologous to $\omega_0$. As observed by Mabuchi \cite{m}, this space has the structure of an infinite dimensional Riemannian manifold, if one identifies it with a totally geodesic subspace of $\mathcal H$, the set of K\"ahler potentials of $\omega_0$. Following Donaldson's \cite{d1} research program, existence and regularity of geodesics in this space is of fundamental interest. In this paper, supposing enough regularity of a geodesic   $u:[0,1]\to \mathcal H$, connecting $u_0 \in \mathcal H$ with $u_1 \in \mathcal H$, we establish a Morse theoretic result relating the critical points of  $u_1-u_0$ to the critical points of $\dot u_0 = du/dt|_{t=0}$. As an application of this result, we prove that on all  K\"ahler manifolds, connecting K\"ahler potentials with smooth geodesics is not possible in general. In particular, in the case $X \neq \Bbb C P^1$, we will also prove that the set of pairs of potentials that can not be connected with smooth geodesics has nonempty interior. This is an improvement upon the findings of \cite{lv} and \cite{dl}.
\end{abstract}
\vspace{0.2in}
\section{Introduction}
Let $(X^n,\omega_0)$ be a connected compact K\"ahler manifold. The space of smooth K\"ahler potentials of $\omega_0$ is the set
\[\mathcal{H} := \left\{ v \in C^\infty(X) : \omega_0 + i \partial \overline{\partial} v > 0\right\}.\]
It is clear that $\mathcal{H}$ is a Fr\'{e}chet manifold as it is an open subset of $C^{\infty}(X)$. From this, it also follows that for each $v \in \mathcal{H}$ we can identify $T_v \mathcal{H}$ with $C^{\infty}(X)$. Following Mabuchi, we define a Riemannian metric on $T_v \mathcal{H}$:
\[\langle \xi,  \eta \rangle := \int_X \xi \eta (\omega_0 + i\partial \overline{\partial}v)^n, \ \ \ \ \xi,\eta \in T_v \mathcal{H}. \]

If $\phi:[0,1] \to \mathcal{H}$ is a smooth curve, one can think of  $\phi$ as an element of $C^{\infty}([0,1]\times X)$. Also, suppose that $\psi$ is a tangent vector field along $\phi$, which again can be treated as an element of $C^{\infty}([0,1]\times X)$.
The Levi-Civita connection of the metric introduced above is as follows:
\[\nabla_{\dot{\phi}} \psi := \dot{\psi} -
\frac{1}{2} \langle \nabla\psi, \nabla\dot\phi \rangle.\]
Hence a smooth curve $g:[0,1] \to \mathcal{H}$ is a geodesic if we have:
$$
0 = \nabla_{\dot{g}} \dot g = \ddot{g} -
\frac{1}{2} \langle \nabla\dot g, \nabla\dot g\rangle.$$

Let us now relate the above discussion to the space of K\"{a}hler metrics over $X$. Hodge theory implies that the correspondence $u \longrightarrow \omega_0 + i \partial \overline\partial{u}$ gives an onto map from $\mathcal{H}$ to $\mathcal{H}_0$. As found by Mabuchi, $\mathcal{H}_0$ can be identified with a totally geodesic subspace of $\mathcal{H}$. This correspondence will automatically put a Riemannain structure on $\mathcal{H}_0$. Moreover, one can prove that with this structure, $\mathcal{H}$ is isometric to the Riemannian product $\mathcal{H}_0 \times \Bbb R$. Hence to study the question of existence of geodesics in $\mathcal{H}_0$ one can study the same question in $\mathcal{H}$.

As discovered by Semmes \cite{s}, the geodesic equation can be rewritten as a complex Monge-Amp\`ere equation. Let $S = \left\{s \in \Bbb C: 0 <\textup{Im}s<1 \right\}$, and let $\omega$ be the pullback of the K\"{a}hler form $\omega_0$ to $\overline{S} \times X$ by the projection $\pi_X: S \times X \to X$. If $\phi:[0,1] \to \mathcal{H}$ is a smooth curve, let $u \in C^{\infty}(\overline{S} \times X)$ be defined by $u(s,x) := \phi(\textup{Im}s, x)$. Semmes observed that $\phi$ is a geodesic if and only if the following equation is satisfied for $u$:
\[(\omega + i \partial \overline{\partial}u)^{n+1}=0.\]
Since the form $\omega + \partial \overline{\partial}u$ is non-degenerate on each $X$-fiber, the above equation is the same as writing $(\omega + i \partial \overline{\partial}u)^{n+1}=0$ but $(\omega + i \partial \overline{\partial}u)^n \neq 0$. Summing up, we can say that if there exists a geodesic connecting $u_0 \in \mathcal{H}$ with $u_1 \in \mathcal{H}$ then there exists a smooth function $u \in C^{\infty}(\overline{S} \times X)$ that satisfies the following conditions:
\begin{alignat}{2}\label{bvp}
  &(\omega + i \partial \overline{\partial}u)^{n+1}=0 \nonumber\\
  &(\omega + i \partial \overline{\partial}u)^n \neq 0 \nonumber \\
  &u(r+ it,x) =u(it,x) \ \forall x \in X, t \in [0,1], r \in \Bbb R \\
  &u(0,x)=u_0(x), \ u(i,x) =u_1(x) \  \forall x \in X\nonumber.
\end{alignat}
Solutions to the above problem have been intensely studied. Uniqueness has already been found by Donaldson \cite{d1}. Chen \cite{c} proved that one always has a $\omega-$plurisubharmonic solution to (\ref{bvp}) in the sense of Bedford and Taylor \cite{bt} for which the current $\partial \overline{\partial}u$ is represented by a bounded form. Of course in this setting the condition $(\omega + i \partial \overline{\partial}u)^n \neq 0$ does not make any sense and hence it is omitted from the problem.

In \cite{lv} the authors proved that on certain K\"ahler manifolds admitting special symmetries there exist boundary data for which (\ref{bvp}) has no $C^3$ solutions. This provided the first example showing that, using smooth geodesics, connecting points in $\mathcal H$ is not possible in general. In \cite{dl}, under similar circumstances, it was shown that the regularity result of Chen we just mentioned is sharp.

In this paper we only investigate $C^3$ solutions of problem (\ref{bvp}). More specifically, with the above conventions in mind, we prove the following result:
\begin{theorem}\label{main}
Suppose $u$ is a $C^3$ solution of (\ref{bvp}). Then all the critical points of $ \dot u_0 := du(it, \cdot )/dt|_{t=0} \in C^2(X)$ are among the critical points of $u_1 - u_0$. Moreover, if $u_1 - u_0$ is a Morse function, then $\dot u_0$ is a Morse function as well.
\end{theorem}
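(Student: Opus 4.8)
The plan is to run everything through the ``gradient-flow'' description of a geodesic. First I would translate the hypothesis, via Semmes' observation recalled above, into the statement that $u_t:=u(it,\cdot)$ solves the real geodesic equation $\ddot u_t=\tfrac12\langle\nabla\dot u_t,\nabla\dot u_t\rangle=|\partial\dot u_t|^2_{\omega_{u_t}}$, where $\omega_{u_t}:=\omega_0+i\partial\overline{\partial}u_t$ is, for each $t\in[0,1]$, a genuine Kähler metric on $X$ (the non-degeneracy $(\omega+i\partial\overline{\partial}u)^n\neq 0$ together with positivity at $t=0,1$ and connectedness of $[0,1]$ prevents the signature from changing on fibres). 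Since $u\in C^3$ we have $\dot u_t\in C^2(X)$ and $\omega_{u_t}\in C^1$, so the time-dependent vector field $Y_t:=-\tfrac12\,\mathrm{grad}_{\omega_{u_t}}\dot u_t$ on the compact manifold $X$ is $C^1$ in $x$ and continuous in $t$, hence generates a flow $F_t\colon X\to X$, $t\in[0,1]$, of $C^1$-diffeomorphisms with $F_0=\mathrm{id}$. The computation at the centre of the argument is that $\tfrac{d}{dt}\dot u_t(F_t(x))=\ddot u_t(F_t(x))+(d\dot u_t(Y_t))(F_t(x))=|\partial\dot u_t|^2_{\omega_{u_t}}-|\partial\dot u_t|^2_{\omega_{u_t}}=0$ --- the geodesic equation is exactly what makes the two terms cancel --- so that $\dot u_t\circ F_t=\dot u_0$ identically on $X$ for all $t$.

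For the first assertion, let $p\in\mathrm{Crit}(\dot u_0)$. Differentiating $\dot u_t\circ F_t=\dot u_0$ at $p$ and using that $dF_t|_p$ is invertible gives $(d\dot u_t)_{F_t(p)}=0$, i.e.\ $F_t(p)\in\mathrm{Crit}(\dot u_t)$; but $Y_t$, being a multiple of the gradient of $\dot u_t$, vanishes on $\mathrm{Crit}(\dot u_t)$, so $\tfrac{d}{dt}F_t(p)=Y_t(F_t(p))=0$, forcing $F_t(p)\equiv p$. Hence $p$ is a critical point of $\dot u_t$ for \emph{every} $t\in[0,1]$, and $d(u_1-u_0)|_p=\int_0^1 d\dot u_t|_p\,dt=0$; that is, $p\in\mathrm{Crit}(u_1-u_0)$.

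For the Morse part I would assume $u_1-u_0$ is Morse, fix $p\in\mathrm{Crit}(\dot u_0)\subseteq\mathrm{Crit}(u_1-u_0)$, and set $A_t:=dF_t|_p$ (invertible, $A_0=\mathrm{id}$) and $H_t:=\mathrm{Hess}_p(\dot u_t)$. Since $F_t(p)=p$ is a critical point shared by $\dot u_t$ and by $\dot u_t\circ F_t=\dot u_0$, comparing second-order terms at $p$ yields $H_0(\cdot,\cdot)=H_t(A_t\cdot,A_t\cdot)$, hence $\ker H_t=A_t(\ker H_0)$. Next, $A_t$ solves the variational equation $\dot A_t=J_tA_t$ with $J_t:=dY_t|_p$, and because $Y_t$ is a rescaled $\omega_{u_t}$-gradient of $\dot u_t$ and $p$ is a critical point of $\dot u_t$, its linearisation there is the metric-raised Hessian $J_t=-\tfrac12\,g_{\omega_{u_t}}(p)^{-1}\circ H_t$; in particular $\ker J_t\supseteq\ker H_t$ (no derivatives of the metric enter, so this needs only $\dot u_t\in C^2$). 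Therefore, for $v\in\ker H_0$ we get $A_tv\in\ker H_t\subseteq\ker J_t$, so $\tfrac{d}{dt}(A_tv)=J_t(A_tv)=0$ and $A_tv\equiv v$; thus $\ker H_t=\ker H_0$ for all $t$. Finally $\mathrm{Hess}_p(u_1-u_0)=\int_0^1 H_t\,dt$, so $\ker H_0\subseteq\ker\mathrm{Hess}_p(u_1-u_0)$, which is trivial by hypothesis; hence $\mathrm{Hess}_p(\dot u_0)$ is non-degenerate. As $p$ was an arbitrary critical point of $\dot u_0$, the function $\dot u_0$ is Morse.

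I expect the real work to lie not in any single step but in checking that $C^3$ regularity suffices for all of them: existence of a $C^1$ flow for $Y_t$ and smoothness of $t\mapsto dF_t|_p$, the second-order chain rule at a fixed common critical point, differentiation under the integral sign in $\mathrm{Hess}_p(u_1-u_0)=\int_0^1\mathrm{Hess}_p(\dot u_t)\,dt$, and the identity $J_t=-\tfrac12\,g_{\omega_{u_t}}(p)^{-1}\circ H_t$ --- the fact that the linearisation of a gradient field at a zero involves no derivative of the metric is precisely what keeps this within reach at $C^3$. I would also verify at the outset that the non-degeneracy condition in (\ref{bvp}) genuinely forces $\omega_{u_t}>0$ on every fibre, since both the gradients above and the form of the geodesic equation I use depend on it.
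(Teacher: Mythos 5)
Your argument has a genuine gap at its very first step: you translate the hypothesis into the scalar geodesic equation $\ddot u_t=|\partial\dot u_t|^2_{\omega_{u_t}}$ and build everything on the vector field $Y_t=-\tfrac12\,\mathrm{grad}_{\omega_{u_t}}\dot u_t$, which requires $\omega_{u_t}=\omega_0+i\partial\overline{\partial}u_t>0$ on $X$ for \emph{every} $t\in[0,1]$. That is not among the hypotheses of the theorem. Problem (\ref{bvp}) only demands $(\omega+i\partial\overline{\partial}u)^n\neq 0$ on the $(n+1)$-dimensional total space $\overline{S}\times X$, and the paper explicitly warns in Section 2 that its formulation ``allows solutions $u$ for which some of the forms $\omega_0+i\partial_X\overline{\partial}_Xu_t$ are degenerate on $X$.'' Your parenthetical justification --- that positivity at $t=0,1$ plus connectedness prevents the signature from changing on fibres --- does not work: the form $\omega+i\partial\overline{\partial}u$ has constant rank $n$ and stays positive semi-definite, but its restriction to a fibre $\{s\}\times X$ degenerates precisely when the one-dimensional kernel becomes tangent to that fibre, which involves no signature change and is not excluded by anything in (\ref{bvp}). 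At such points $Y_t$ is undefined and the HMAE does not reduce to your scalar equation, so the global flow $F_t$ and the identity $\dot u_t\circ F_t=\dot u_0$ are not available. The paper avoids this by working with the kernel of $\omega+i\partial\overline{\partial}u$ on the total space (always one-dimensional, hence an integrable foliation for $C^3$ solutions), and its Lemma 3.1 pins down the leaf through a critical point of $\dot u_0$ by analytic continuation from the $r$-direction flow at $t=0$, where non-degeneracy is known; transversality of that leaf to the fibres is a \emph{conclusion}, not an assumption.

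Modulo this gap the rest of your argument is sound and is in fact the real-variable shadow of the paper's complex-analytic proof: your $Y_t$ is exactly $i$ times Semmes's generator ${\partial}/{\partial r}+J\nabla_{g_t}\dot u_t/2$ of the Monge--Amp\`ere distribution projected to the fibre, your flow traces the leaves in the imaginary direction, and your variational equation $\dot A_t=J_tA_t$ with $\ker J_t=\ker H_t$ plays the role of the paper's ODE (\ref{fol5}) for the first variation $\varphi$ of the leaves, with the constancy of $A_tv$ for $v\in\ker H_0$ replacing the constant solution $\varphi\equiv\alpha$ and the analytic continuation to $\mathrm{Im}\,s=1$. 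To close the gap you would either need to first prove the paper's Lemma 3.1 (after which non-degeneracy of $\omega_{u_t}$ at $x_0$, hence near $x_0$, for all $t$ does follow, since the kernel along $\overline{S}\times\{x_0\}$ is spanned by $\partial/\partial s$), or run an open--closed argument in $t$ along the single trajectory of $x_0$, showing that criticality of $x_0$ for $\dot u_t$ and $\ddot u_t(x_0)=0$ force $\partial/\partial s\in\mathrm{Ker}(\omega+i\partial\overline{\partial}u)|_{(it,x_0)}$ and hence propagate non-degeneracy of the fibre form at $x_0$ past any putative first degeneracy time. Either fix is essentially the content of the paper's Lemma 3.1, so it cannot be skipped.
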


The above result establishes a connection between the boundary data and initial tangent vector of $C^3$ solutions of the complex Monge-Amp\`ere equation. This theorem is proved by a careful analysis of the first order variation of the Monge-Amp\`ere foliation of $u$ at each critical point of the initial tangent vector $\dot u_0$. A similar method has been applied with success in \cite{lv} as well. As an application we prove the following theorem:

\begin{theorem}\label{nogeo}
For each  connected compact K\"ahler manifold ($X, \omega_0$)  one can find a pair of potentials $u_0 ,u_1 \in \mathcal H$ that can not be connected by a smooth geodesic. If $X \neq \Bbb C P^1$, then the set of such pairs of potentials has nonempty interior in $C^\infty(X) \times C^\infty(X)$.
\end{theorem}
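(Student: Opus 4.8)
The plan is to deduce Theorem~\ref{nogeo} from Theorem~\ref{main} by choosing boundary data whose difference $f := u_1 - u_0$ is a Morse function for which the rigidity of Theorem~\ref{main}, suitably amplified, cannot be satisfied by any admissible initial velocity. First I would assemble the constraints that a $C^3$ solution $u$ of (\ref{bvp}) has to obey. From the geodesic equation, $\partial_t^2 u(it,x) = \tfrac12\langle \nabla_x\dot u,\nabla_x\dot u\rangle \ge 0$, so $t\mapsto u(it,x)$ is convex for each fixed $x$; hence $\dot u_0 \le f \le \dot u_1$ on $X$, where $\dot u_1 := du(it,\cdot)/dt|_{t=1}$. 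Applying Theorem~\ref{main} to the time-reversed geodesic $t\mapsto u(i(1-t),x)$ gives, in addition to $\mathrm{Crit}(\dot u_0)\subseteq\mathrm{Crit}(f)$, also $\mathrm{Crit}(\dot u_1)\subseteq\mathrm{Crit}(f)$, and that $\dot u_0$ and $\dot u_1$ are both Morse when $f$ is. I would then mine the proof of Theorem~\ref{main} for the sharper local fact that at a critical point $p$ of $\dot u_0$ the leaf of the Monge--Amp\`ere foliation through $(0,p)$ is the constant disk $s\mapsto(s,p)$, so $u(it,p)$ is affine in $t$ and $\dot u_0(p)=f(p)$, and that the first order variation of the foliation at $p$ ties $\mathrm{Hess}_p\dot u_0$ to $\mathrm{Hess}_p f$ by an explicit identity depending only on $\omega_0(p)$ (the analogue of the computation of \cite{lv}); symmetric statements hold for $\dot u_1$.

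With this in hand, I would take $u_0\in\mathcal H$ arbitrary, $h$ a Morse function on $X$, and $u_1 := u_0 + \varepsilon h$ with $\varepsilon>0$ small enough that $u_1\in\mathcal H$ (possible since $\mathcal H$ is open). If a $C^3$ geodesic joined $u_0$ to $u_1$, then $\dot u_0,\dot u_1$ would be Morse functions with critical sets inside the finite set $\mathrm{Crit}(h)$, squeezed as $\dot u_0 \le \varepsilon h \le \dot u_1$, and agreeing with $\varepsilon h$ up to the prescribed Hessian correction at their own critical points. In particular $\dot u_0$ and $\varepsilon h$ would share the location and value of their minimum, $\dot u_1$ and $\varepsilon h$ their maximum, and each critical point of $\dot u_0$ would have, via the Hessian identity, a Morse index completely determined by the $2$-jet of $h$ there. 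The crux is to pick $h$ so that the resulting system has no solution --- e.g.\ so that the index forced on $\dot u_0$ at the carrier of its minimum is not $0$, or so that $\mathrm{Crit}(\dot u_0)\subseteq\mathrm{Crit}(h)$ together with the Morse inequalities on $X$ cannot be reconciled with the index pattern dictated by $h$. Any such $h$ then yields a pair $(u_0,u_1)$ not joined by a smooth geodesic.

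For $X\neq\Bbb C P^1$ one has $b_{2k}(X)\ge 1$ for $0\le k\le n$ (the powers of the K\"ahler class are nonzero in cohomology), so $X$ is not a topological sphere and every Morse function on it has at least three critical points, with a fixed multiset of indices. This rigidity is exactly what should make the contradiction of the previous paragraph survive an arbitrary $C^\infty$-small perturbation of $h$, hence of the pair $(u_0,u_1)$: the obstruction becomes an open condition on the $2$-jets of $h$ at its critical points. That yields the nonempty interior statement. For $X=\Bbb C P^1$, Reeb's theorem produces a Morse function with just two critical points (one minimum, one maximum), the combinatorial part of the above constraints is then automatically met, and one has to hand-pick a single $h$ with three or more critical points and appeal directly to the Hessian identity to get one non-connectable pair; this construction is not stable, in agreement with the statement.

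The step I expect to be the real obstacle is the one buried in the first two paragraphs: extracting from the machinery behind Theorem~\ref{main} the precise first order variation of the Monge--Amp\`ere foliation at a critical point of $\dot u_0$, and then running the Morse-index linear algebra that certifies the constraints ``$\mathrm{Crit}(\dot u_0)\subseteq\mathrm{Crit}(h)$, $\dot u_0$ Morse, $\dot u_0\le\varepsilon h$ with second-order contact along $\mathrm{Crit}(\dot u_0)$'' are genuinely inconsistent for a suitable $h$ --- and, for $X\neq\Bbb C P^1$, verifying that this inconsistency is stable under perturbation. Everything else (the convexity of geodesics, the Betti-number bound, the passage from the statement for $\mathcal H$ to one about pairs of K\"ahler metrics) is standard.
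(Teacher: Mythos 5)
Your overall architecture --- use Theorem \ref{main} to force $\dot u_0$ to be a Morse function with $\mathrm{Crit}(\dot u_0)\subseteq\mathrm{Crit}(u_1-u_0)$, then derive a contradiction from a second-order constraint at a critical point --- is the same as the paper's, but the two steps you yourself flag as ``the real obstacle'' are precisely where the proof lives, and your proposed construction would not survive them. The quantitative obstruction is not a ``Hessian identity'' determining the index of $\dot u_0$ from the $2$-jet of $h$ (Theorem \ref{main} and its proof only give a kernel inclusion $\mathrm{Ker}\,H\dot u_0(x_0)\subseteq\mathrm{Ker}\,Hu_1(x_0)$); it is an \emph{inequality} (Lemma 3.2 of \cite{dl}, appearing as (\ref{viol})) bounding $|\sum v'_{z_jz_k}\xi_j\xi_k|$ by $\sum(2\omega'_{jk}\xi_j\overline{\xi}_k+v'_{z_j\overline{z}_k}\xi_j\overline{\xi}_k)$ at a critical point of $\dot u_0$. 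Your choice $u_1=u_0+\varepsilon h$ with $\varepsilon$ small cannot violate it: the left-hand side scales like $\varepsilon$ while the metric term $2\omega'(\xi,\overline{\xi})$ on the right is of order one, so shrinking $h$ to stay in $\mathcal H$ destroys the obstruction. The paper instead keeps $v=u_1-u_0$ a fixed \emph{minimal} Morse function (so that $\mathrm{Crit}(\dot u_0)=\mathrm{Crit}(v)$ exactly, forcing a chosen saddle $x_0$ to be critical for $\dot u_0$), normalizes the Hessian of $v$ at the saddle so that $v_{z_1\overline{z}_1}(x_0)=0$ while $v_{z_1z_1}(x_0)\neq 0$, and then \emph{modifies the background potential} by adding $(\gamma-1)\rho|z_1|^2$ to both $u_0$ and $u_1$, degenerating the metric $\omega'$ in the $z_1$-direction just enough that (\ref{viol}) fails for $\xi=\partial/\partial z_1$. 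The hypothesis $X\neq\Bbb C P^1$ enters exactly to guarantee, via an intermediate positive Betti number and the Morse inequalities, that the minimal Morse function has a saddle; openness then follows from the stability of minimal Morse functions under perturbation and the strictness of (\ref{viol}). None of this is recoverable from your sketch, since the design of the counterexample depends on the precise form of the inequality you leave unidentified.

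Your treatment of $X=\Bbb C P^1$ is also not viable as sketched: on $S^2$ the minimal Morse function has only a minimum and a maximum, and choosing $h$ with \emph{extra} critical points only weakens the constraint $\mathrm{Crit}(\dot u_0)\subseteq\mathrm{Crit}(h)$ (it gives $\dot u_0$ more places to put its two required critical points, with no control over which ones it uses), so neither the combinatorics nor the second-order inequality can be brought to bear at a prescribed point. The paper handles $\Bbb C P^1$ by a genuinely different route: every K\"ahler class there is a multiple of the Fubini--Study class, which admits the holomorphic isometric involution with an isolated fixed point required by Theorem 1.1 of \cite{lv}, and that theorem supplies the non-connectable pair directly (with no openness claimed, consistent with the statement).
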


This result is an improvement upon the similar non-existence result of \cite{lv}. The examples constructed there only work on manifolds admitting holomorphic isometries $A:X \to X$ having an isolated fixed point and satisfying the identity $A^2 = Id$.

In K\"ahler geometry, the special role of symmetries, especially that of one-parameter group actions, has a long history. The first result of this kind was that of Matushima \cite{m}, who proved that if the Lie algebra of holomorphic vector fields on a complex manifold is not reductive, then the manifold admits no K\"ahler-Einstein metrics. By contrast, in the absence of holomorphic vector fields, the search for canonical K\"ahler metrics becomes more promising. Calabi \cite{ca} observed that in this case, all extremal metrics will have constant scalar curvature. The Futaki invariant, that serves as an obstruction to finding constant scalar curvature metrics, vanishes in this situation as well \cite{f}.

With the above picture and the examples of \cite{lv} in mind, one might perhaps think that constructing pairs of K\"ahler potentials that can not be connected by a smooth geodesic requires the presence of holomorphic symmetries. The interest of Theorem \ref{nogeo} is showing that this is not the case.

It would be interesting to see if one can prove the last statement of the above result for $\Bbb C P^1$ as well. We summarize the proof of Theorem \ref{nogeo} here. In the case $X = \Bbb C P^1$ every K\"ahler metric $\omega_0$ is in the K\"ahler class of some  multiple of the Fubini-Study metric $\omega_{FS}$. However, each metric $c \omega_{FS}$ satisfies the symmetry conditions of Theorem 1.1 in \cite{lv}, and so this theorem provides the $u_0,u_1 \in \mathcal H$ that we seek. Hence, to prove the result, we only have to consider the case $X \neq \Bbb C P^1$. The rest of the argument  will be a consequence of the following more precise statement:

\begin{theorem}\label{noc3}
Suppose $X \neq \Bbb C P^1$. For each connected compact K\"ahler manifold $(X, \omega_0)$ one can find boundary data $u_0 ,u_1 \in \mathcal H$ for which problem (\ref{bvp}) has no $C^3$ solutions. In particular, the set of such pairs has nonempty interior in $\mathcal H$.
\end{theorem}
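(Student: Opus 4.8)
To produce $u_0,u_1\in\mathcal H$ for which (\ref{bvp}) has no $C^3$ solution, I would combine Theorem \ref{main} with two elementary properties of geodesics, use them to constrain $\dot u_0$ severely, and then choose the boundary data so that the constraints become inconsistent. The two elementary properties: (i) from the geodesic equation $\ddot g=\tfrac12\langle\nabla\dot g,\nabla\dot g\rangle_{\omega_g}\ge0$ each function $t\mapsto g(t,x)$ is convex, so $\dot u_0\le u_1-u_0\le\dot u_1$ on $X$; (ii) differentiating $\int_X\dot g(t)\,\omega_{g(t)}^n$ in $t$ and integrating by parts, the geodesic equation makes the two terms cancel, so $\int_X\dot g(t)\,\omega_{g(t)}^n$ is independent of $t$, in particular $\int_X\dot u_0\,\omega_{u_0}^n=\int_X\dot u_1\,\omega_{u_1}^n$. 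Suppose $u$ is a $C^3$ solution with $f:=u_1-u_0$ a Morse function. By Theorem \ref{main}, $\dot u_0$ is a Morse function with $\operatorname{Crit}(\dot u_0)\subseteq\operatorname{Crit}(f)$; moreover $\dot u_0$ is non-constant, since otherwise uniqueness of geodesics forces $g(t,\cdot)=u_0+t\dot u_0$ and hence $f$ constant. Applying Theorem \ref{main} to the reversed geodesic $t\mapsto g(1-t,\cdot)$ gives the analogous facts for $\dot u_1$.

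The hypothesis $X\ne\Bbb C P^1$ enters through a topological lower bound on critical points. A connected compact K\"ahler $X\ne\Bbb C P^1$ has $\operatorname{cat}(X)\ge3$: for $n=\dim_{\Bbb C}X\ge2$ because $[\omega_0]^n\ne0$ gives cup-length $\ge n\ge2$, so $\operatorname{cat}(X)\ge n+1$; for $n=1$ because $X$ is a surface of positive genus, where $\operatorname{cat}=3$. Hence any smooth function on $X$, in particular the non-constant Morse function $\dot u_0$, has at least three critical points. If $f$ is chosen to be a Morse function with the fewest possible critical points, then $\operatorname{Crit}(\dot u_0)\subseteq\operatorname{Crit}(f)$ and $|\operatorname{Crit}(\dot u_0)|\ge3$ force $\operatorname{Crit}(\dot u_0)=\operatorname{Crit}(f)$ (using, where needed, that a proper subset of $\operatorname{Crit}(f)$ cannot be the critical set of a Morse function, by the Euler-characteristic count); similarly $\operatorname{Crit}(\dot u_1)=\operatorname{Crit}(f)$. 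Thus $\dot u_0,f,\dot u_1$ become three Morse functions with the same, minimal critical set, nested $\dot u_0\le f\le\dot u_1$.

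The contradiction should come from the Morse index of $\dot u_0$ at its critical points, which I expect to be pinned down by the first-order analysis of the Monge--Amp\`ere foliation carried out in the proof of Theorem \ref{main} (so that one reads off more than the statement of the theorem), combined with property (ii). With $f$ chosen so that its global minimum $p_0$ is its unique index-$0$ critical point, the index information should force the index-$2n$ critical point of $\dot u_0$ — the point where $\dot u_0$ attains its maximum — to lie over $p_0$; then
\[\max_X\dot u_0=\dot u_0(p_0)\le f(p_0)=\min_X f,\]
and since $f$ is non-constant, $\int_X f\,\omega_{u_1}^n>(\min_X f)\,V$ (here $V=\int_X\omega_0^n$), so
\[\int_X\dot u_0\,\omega_{u_0}^n\le(\min_X f)\,V<\int_X f\,\omega_{u_1}^n\le\int_X\dot u_1\,\omega_{u_1}^n,\]
contradicting $\int_X\dot u_0\,\omega_{u_0}^n=\int_X\dot u_1\,\omega_{u_1}^n$. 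The conditions on $(u_0,u_1)$ — that $u_0\in\mathcal H$ and that $f$ be a Morse function with the prescribed minimal critical data — are all open, which yields the nonempty interior.

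The main obstacle is the step just sketched: the inclusion $\operatorname{Crit}(\dot u_0)\subseteq\operatorname{Crit}(f)$ and the inequality $\dot u_0\le f$ are, separately, automatically consistent with property (ii) — for instance via $\int_X f\,(\omega_{u_0}^n-\omega_{u_1}^n)\ge0$ — so the argument cannot rely on them alone; it must use the Hessian relation between $\dot u_0$ and $u_1-u_0$ at a common critical point coming from inside the proof of Theorem \ref{main}. Determining that relation precisely, arranging $f$ so that it actually produces the contradiction (this is presumably where a third critical point, a saddle, which $\Bbb C P^1$ does not have, is essential), and making one construction work for \emph{every} compact K\"ahler $X\ne\Bbb C P^1$, are the delicate points; the reduction and the openness statement are comparatively routine.
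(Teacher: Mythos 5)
Your reduction matches the paper's up to a point: both use Theorem \ref{main} together with a Morse function $f=u_1-u_0$ having the minimal number of critical points to force $\operatorname{Crit}(\dot u_0)=\operatorname{Crit}(f)$, and both recognize that the essential feature of $X\neq\Bbb C P^1$ is the existence of a saddle point (the paper obtains one from $b_i(X)>0$ for some $0<i<2n$ and the Morse inequalities). But the step that actually produces the contradiction is absent from your proposal, and you say so yourself. The mechanism you hope for --- that the foliation analysis pins down the Morse \emph{index} of $\dot u_0$ at each common critical point, forcing the index-$2n$ point of $\dot u_0$ to sit at the minimum of $f$, whereupon convexity of $t\mapsto g(t,x)$ and conservation of $\int_X\dot g\,\omega_g^n$ give $\int_X\dot u_0\,\omega_{u_0}^n<\int_X\dot u_1\,\omega_{u_1}^n$ --- is not supported by anything in the proof of Theorem \ref{main}. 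What that proof yields at a common critical point is only the kernel inclusion $\operatorname{Ker}H\dot u_0(x_0)\subseteq\operatorname{Ker}Hu_1(x_0)$, i.e.\ a transfer of nondegeneracy, with no control of signatures; and for $u_1$ close to $u_0$ one expects $\dot u_0$ to be $C^2$-close to $f$, so its maximum should sit near the maximum of $f$, not the minimum. So the index-forcing claim is not merely unproven but implausible as a general statement, and your argument collapses without it.

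The paper's contradiction is of a different nature: it is a local, second-order necessary condition at the saddle point, namely Lemma 3.2 of \cite{dl}, whose hypotheses are verified using Lemma \ref{crit} and the $\omega$-plurisubharmonicity of a $C^3$ solution. At a saddle $x_0$ of index $p$ one arranges, by precomposing with a diffeomorphism, that in suitable holomorphic coordinates $v_{z_1\overline{z}_1}(x_0)=0$ while $v_{z_1z_1}(x_0)\neq0$; one then perturbs the \emph{background} potential by a compactly supported term $(\gamma-1)\rho(z)|z_1|^2$ which shrinks the metric in the $z_1$-direction at $x_0$ to size $\gamma$, chosen so small that the Hessian inequality demanded by that lemma, $|v'_{z_1z_1}(x_0)|\le 2\omega'_{1\overline{1}}+v'_{z_1\overline{z}_1}(x_0)$, fails. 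Your convexity and conservation-law observations are correct but play no role in the paper and, as you note, cannot by themselves separate the consistent from the inconsistent configurations. To complete the proof you need a quantitative second-order obstruction at the saddle (not an index statement) together with the freedom to modify $u_0$ locally near that saddle; neither ingredient appears in your sketch.
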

\emph{Acknowledgement.} I would like to thank Y. A. Rubinstein for bringing to my attention the paper \cite{rz} and for his patience in conversations related to the topic. As we found out later, he was developing similar ideas to ours at the time this paper was written. I would also like to thank L. Lempert for his guidance and for his many useful suggestions.
\vspace{0.2 in}
\section{The Monge-Amp\`ere foliation}

In this section we summarize known facts about the Monge-Amp\`ere foliation, first discusessed in \cite{bk}. For a more complete picture, the reader is referred to \cite{d2} or Section 2.1 in \cite{rz}.

The reason $C^3$ solutions of the complex Monge-Amp\`ere equation are special is that this is the minimal amount of regularity that is needed for the existence of the Monge-Amp\`ere foliation. This is defined as follows. Let $u \in C^\infty(\overline{S} \times X)$ be a solution of (\ref{bvp}). This means that for each $(s,x) = (r + i t, x) \in \overline{S} \times X$, $\textup{Ker}(\omega + i \partial \overline{\partial}u)|_{(s,x)}$ is a one dimensional complex subspace of $T_{(s,x)}(\overline{S} \times X)$, called the Monge-Amp\`ere distribution. Since the form $\omega + i \partial \overline{\partial}u$ is closed, this complex subbundle of $T(\overline{S} \times X)$ is integrable, hence gives rise to a foliation in which each leaf is a one dimensional complex submanifold of $\overline{S} \times X$.

We will denote by $\mathcal F_x$ the leaf of this foliation passing through the point $(0,x)$. Also, $u_t( \cdot )$ and $\dot u_t( \cdot  )$ will stand for $u(it, \cdot)$ and $du(it,\cdot)/dt$ for $t \in[0,1]$. We should note that  our formulation of (\ref{bvp}) allows solutions $u$ for which some of the forms $\omega_0 + i \partial_X \bar \partial_X u_t$ are degenerate on $X$. Because of this, in general, the leaves $\mathcal F_x$ can not be assumed to be diffeomorphic to the strip $\overline{S}.$ This will only be a minor inconvenience as we will see in a moment.

As observed by Semmes \cite{s}, the vector field ${\partial}/{\partial r} + J\nabla_{g_{0}}\dot u_{0}/2$ generates $\textup{Ker}(\omega + i \partial \overline{\partial}u)|_{(r,x)}$, where $g_{0}$ is the Riemannian metric corresponding to the K\"ahler metric $\omega_0 + i \partial_X \overline{\partial}_X u_0$ and $x \in X$, $r \in \Bbb R$. Since $J \nabla_{g_{0}}\dot u_{0}/2$ is independent of $r$, this has a very interesting consequence.  If $r \to f_x(r)$, $r \in \Bbb R$ is the trajectory of the time independent vector field $J\nabla_{g_{0}}\dot u_{0}/2$ with initial data $f_x(0)=x$, then the leaf $\mathcal F_x$ contains the image of the arc $r \to \Gamma_x(r)=(r,f_x(r))$. Hence, with a slight abuse of precision, one should think of each leaf $\mathcal F_x$ as the unique analytic extension of the curve $\Gamma_x$. This fact will be used in the next section.
\vspace{0.2 in}
\section{Proof of Theorem \ref{main}}

By replacing $\omega_0$ with $\omega_0 + i \partial \overline{\partial}u_0$ and $u(s,x)$ with $u(s,x) - u_0(x)$, we can assume that $u_0=0$. We prove Theorem \ref{main} in two steps.

\begin{lemma}\label{crit} If $x_0$ is a critical point of $\dot u_0$ then it is a critical point of $\dot u_t$ for all $t \in [0,1]$ and the leaf of the Monge-Amp\`ere foliation through $x_0$ is $\overline S \times \left\{x_0\right\}$. In particular $t \to u_t(x_0)$ is linear.
\end{lemma}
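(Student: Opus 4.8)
The plan is to exploit the explicit description of the Monge-Ampère foliation recalled in Section 2: the leaf $\mathcal F_{x}$ contains the arc $r\mapsto\Gamma_{x}(r)=(r,f_{x}(r))$, where $f_{x}$ is the trajectory of the $r$-independent vector field $J\nabla_{g_{0}}\dot u_{0}/2$ starting at $x$. First I would observe that if $x_{0}$ is a critical point of $\dot u_{0}$, then $\nabla_{g_{0}}\dot u_{0}$ vanishes at $x_{0}$, so $x_{0}$ is a stationary point of that vector field; hence $f_{x_{0}}(r)=x_{0}$ for all $r$, and the arc $\Gamma_{x_{0}}$ is precisely $\Bbb R\times\{x_{0}\}$. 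Since the leaf $\mathcal F_{x_{0}}$ is a one dimensional complex submanifold containing this real arc, it must be its complexification, i.e.\ $\mathcal F_{x_{0}}=\overline S\times\{x_{0}\}$; equivalently $\partial/\partial r$ together with $\partial/\partial t$ span the Monge-Ampère distribution along $\{(s,x_{0}):s\in\overline S\}$.

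Next I would extract the consequences for $u$ restricted to this vertical leaf. Writing the geodesic/Monge-Ampère equation along the leaf $\overline S\times\{x_{0}\}$, the statement that $T(\overline S\times\{x_{0}\})$ lies in $\mathrm{Ker}(\omega+i\partial\overline\partial u)$ says in particular that the $(s,\bar s)$-component of $\omega+i\partial\overline\partial u$ vanishes there, i.e.\ $\partial^{2}u/\partial s\partial\bar s=0$ at every point of the leaf; by the $r$-invariance in (\ref{bvp}) this is $\ddot u_{t}(x_{0})=0$ (up to the usual factor $1/4$), so $t\mapsto u_{t}(x_{0})$ is affine in $t$ — this gives the last assertion of the lemma.

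It remains to show $x_{0}$ is a critical point of $\dot u_{t}$ for every $t$, not just $t=0$. For this I would use Semmes' generator again, but now based at the slice $\{t\}\times X$ rather than $\{0\}\times X$: by the same argument applied with $\omega_{0}+i\partial_{X}\overline\partial_{X}u_{t}$ in place of $\omega_{0}$ (legitimate since the foliation is intrinsic and independent of the chosen time slice, wherever $u_{t}\in\mathcal H$), the leaf through $(t,x_{0})$ is the analytic continuation of the trajectory of $J\nabla_{g_{t}}\dot u_{t}/2$; but we have just shown this leaf is the vertical line $\overline S\times\{x_{0}\}$, whose tangent contains $\partial/\partial r$, which forces the trajectory to be stationary at $x_{0}$, hence $\nabla_{g_{t}}\dot u_{t}(x_{0})=0$, i.e.\ $d\dot u_{t}(x_{0})=0$. (If $u_{t}$ fails to be in $\mathcal H$ for some $t$, one argues by continuity in $t$ on the open set where it is, the slices $t=0,1$ being in $\mathcal H$ by hypothesis; the alternative, noted in Section~2, that the leaf may not be diffeomorphic to $\overline S$, does not occur here since the leaf is literally $\overline S\times\{x_{0}\}$.)

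The main obstacle I anticipate is the bookkeeping in the step "the leaf containing the real-analytic arc $\Gamma_{x_{0}}$ equals its complexification": one must be sure that $C^{3}$ regularity of $u$ is enough to run the uniqueness/analytic-continuation argument for leaves of the Monge-Ampère foliation, and that no degeneracy of $\omega_{0}+i\partial_{X}\overline\partial_{X}u_{t}$ at intermediate times interferes — this is exactly the "minor inconvenience" flagged in Section~2, and handling it cleanly (probably via the identification of the leaf as a genuine submanifold $\overline S\times\{x_{0}\}$ on which everything is smooth) is where the care is needed.
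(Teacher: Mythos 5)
Your identification of the leaf $\mathcal F_{x_0}=\overline S\times\{x_0\}$ from the stationarity of the trajectory, and your derivation of the linearity of $t\mapsto u_t(x_0)$ from the vanishing of the $(s,\bar s)$-component of $\omega+i\partial\overline{\partial}u$ along that leaf, both match the paper's argument. The problem is the middle claim, that $x_0$ is critical for $\dot u_t$ for \emph{every} $t$. You prove it by re-running Semmes' description of the generator based at the slice $t$, which requires $\omega_0+i\partial_X\overline{\partial}_Xu_t$ to be a genuine K\"ahler form so that the gradient $\nabla_{g_t}\dot u_t$ is defined. As the paper stresses in Section 2, problem (\ref{bvp}) allows these fiberwise forms to degenerate at intermediate times, and your fallback --- arguing by continuity on the open set of $t$ where $u_t$ is a potential --- does not close the gap: that open set contains $t=0$ and $t=1$ but need not be dense in $[0,1]$, so the vanishing of $d\dot u_t(x_0)$ does not propagate across an interval on which the fiberwise form degenerates.

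The fix is already contained in your own setup and is what the paper does. Once you know that $\mathrm{Ker}(\omega+i\partial\overline{\partial}u)|_{(s,x_0)}$ is spanned by $\partial/\partial s$ for every $s\in\overline S$, pair this kernel vector not only with $\partial/\partial\bar s$ (which gives $u_{s\bar s}(s,x_0)=0$, hence linearity) but also with each $\partial/\partial\bar z_j$; since $\omega$ is pulled back from $X$ it contributes nothing, and you get $u_{s\bar z_j}(s,x_0)=0$ for all $s$ and all $j$. Translation invariance gives $\dot u_t=2i\,\partial u/\partial s$, so $(\dot u_t)_{\bar z_j}(x_0)=2i\,u_{s\bar z_j}(it,x_0)=0$, and since $\dot u_t$ is real its full differential vanishes at $x_0$. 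This requires no nondegeneracy of the fiberwise forms and no continuity argument.
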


\begin{proof}
Denote $\omega_t = \omega_0 + i \partial_X \overline{\partial}_X u_t$. We saw in the preceding section that the vector ${\partial}/{\partial r} + J\nabla_{{g_0}}{\dot u_0}/2$ generates the Monge-Amp\`ere distribution at any point $(r, x) \in \Bbb R \times X$.

Since $x_0$ is a critical point of $\dot u_0$, the vector field $J\nabla_{{g_0}}{\dot u_0}/2$ vanishes at $x_0$, hence its trajectory $r \to f_{x_0}(r)$ is $x_0$. Therefore, following the reasoning of the preceding section, we obtain that $\mathcal F_{x_0} = \overline S \times \left\{{x_0}\right\}$. Hence, $\partial/ \partial r$ generates $\textup{Ker}(\omega + i \partial \overline{\partial}u)|_{(s,x_0)}$ for any $s \in \overline{S}$. In any holomorphic chart of $X$ around $x_0$, this implies that $u_{\overline{s}z_j}(s,x_0)=u_{s \overline{z}_j}(s,x_0)=0$ and $u_{s\overline{s}}(s,x_0)=0$ for all $s \in \overline{S}$ and for all $j=1,\dots, n$. As $u$ is translation invariant, $\dot u_t = 2i \partial u/ \partial s$, $\ddot u_t = 4 \partial^2 u/ \partial s\bar \partial s$ and we obtain the desired result.
\end{proof}

From this lemma, we obtain that if $x_0$ is a critical point of $\dot u_0$, then it is a critical point of $u_1$ as well, since in any holomorphic chart around $x_0$, we have ${u_1}_{z_j}(x_0) = \int_0^1 \dot {u_t}_{z_j}(x_0)dt=0 \textup{  and  } {u_1}_{\overline{z}_j}(x_0) = \int_0^1 \dot {u_t}_{\overline{z}_j}(x_0)dt=0$ for all $j=1,\dots, n$. All that is left, is to prove the nondegeneracy in Theorem \ref{main}. We will do this in the following lemma.

\begin{lemma}
If $x_0$ is a degenerate critical point of $\dot u_0$, then it is a degenerate critical point of $u_1$ as well. Moreover, for the Hessians of $\dot u_0$ and $u_1$ at $x_0$, we have the inclusion $\textup{Ker }H\dot u_0(x_0)  \subseteq \textup{Ker }H {u_1(x_0)}$.
\end{lemma}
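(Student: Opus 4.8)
The plan is to redo the analysis of Lemma \ref{crit}, but this time tracking the \emph{first-order} behaviour of the Monge-Amp\`ere foliation near the leaf $\overline S \times \{x_0\}$, rather than just evaluating at $x_0$ itself. Fix a holomorphic coordinate chart $(z_1,\dots,z_n)$ on $X$ centred at $x_0$, so that $x_0$ corresponds to $z=0$. By Lemma \ref{crit} we already know that at $z=0$ all the mixed derivatives $u_{s\bar z_j}$, $u_{\bar s z_j}$, $u_{s\bar s}$ vanish for every $s\in\overline S$, that $\mathcal F_{x_0}=\overline S\times\{x_0\}$, and that the generator of the Monge-Amp\`ere distribution along this leaf is $\partial/\partial r$. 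The first thing I would do is differentiate the foliation generating vector field once in the $X$-directions: since near $(r,x_0)$ the generator is $\partial/\partial r + \tfrac12 J\nabla_{g_0}\dot u_0$, and $\nabla_{g_0}\dot u_0$ vanishes at $x_0$ together with the fact that $g_0 = g_{\omega_0}$ is a fixed metric, the linearization of $\tfrac12 J\nabla_{g_0}\dot u_0$ at $x_0$ is governed precisely by $H\dot u_0(x_0)$, the (complex) Hessian of $\dot u_0$. In particular, directions in $\textup{Ker }H\dot u_0(x_0)$ are exactly the directions in which the nearby leaves of the foliation "fail to spread out" to first order — the Jacobi-type field of the foliation degenerates there.

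The second step is to turn this into a statement about $u_1$. Recall from the remark after Lemma \ref{crit} that for any $j$, ${u_1}_{z_j}(x) = \int_0^1 \dot u_{t,z_j}(x)\,dt$ and similarly for $\bar z_j$. Differentiating once more in the $X$-directions and evaluating at $x_0$ gives $H u_1(x_0) = \int_0^1 H\dot u_t(x_0)\,dt$ (as a real Hessian; the complex version is the analogous integral). So it suffices to show that for each fixed $t\in[0,1]$, $\textup{Ker }H\dot u_0(x_0)\subseteq \textup{Ker }H\dot u_t(x_0)$, and then the kernel of the average contains the common kernel. (One must be slightly careful here: an integral of positive-semidefinite-on-a-subspace forms need not vanish on that subspace, so I would instead argue that each $H\dot u_t(x_0)$ itself kills $\textup{Ker }H\dot u_0(x_0)$, which makes the integrated statement immediate; the degeneracy of $u_1$ then follows since $H\dot u_0(x_0)$ is degenerate by hypothesis, hence so is each $H\dot u_t(x_0)$ and hence so is their integral, \emph{provided} one rules out cancellation — again handled by showing the common-kernel containment directly rather than via signs.)

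The heart of the argument, therefore, is the claim: if $v\in\textup{Ker }H\dot u_0(x_0)$ then $v\in\textup{Ker }H\dot u_t(x_0)$ for all $t$. To prove this I would differentiate the equation $(\omega+i\partial\bar\partial u)^{n+1}=0$ in an $X$-direction along the leaf $\overline S\times\{x_0\}$. Concretely, writing the equation in the chart, the $(n+1)$-fold wedge vanishing combined with the rank-$n$ condition forces, at each point of the degenerate leaf, a linear relation among the second derivatives of $u$; differentiating this relation once in the direction $v$ and using the vanishing established in Lemma \ref{crit} (so that most of the boundary terms drop out) should yield an ordinary differential equation in $s$ for the quantities $\partial_v\partial_v u_{s\bar s}(s,x_0)$, $\partial_v u_{s\bar z_j}(s,x_0)$, etc., along the leaf. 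The translation invariance $u(r+it,x)=u(it,x)$ reduces this to an ODE in $t\in[0,1]$. Using $\dot u_t = 2i\,\partial u/\partial s$ and $\ddot u_t = 4\,\partial^2 u/\partial s\partial\bar s$ as in Lemma \ref{crit}, the hypothesis $v\in\textup{Ker }H\dot u_0(x_0)$ becomes the vanishing of the relevant data at $t=0$, and the ODE then propagates this vanishing to all $t\in[0,1]$, in particular to $t=1$.

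The main obstacle I anticipate is making the differentiated Monge-Amp\`ere relation precise: one has to choose the right scalar contraction of $(\omega+i\partial\bar\partial u)^{n+1}=0$ (e.g. dividing out the nondegenerate fibre form $\omega_t^n$, which is legitimate on a $C^3$ solution), differentiate it once in the $X$-direction $v$, and carefully verify that every term not involving the second-order data along the leaf is killed by the first-order vanishing from Lemma \ref{crit}. This is a somewhat delicate bookkeeping computation with mixed holomorphic/antiholomorphic derivatives, and getting the ODE into a clean linear first-order (or second-order) form — from which propagation of zero data is immediate by uniqueness — is where the real work lies. Everything else is linear algebra and the fundamental theorem of calculus.
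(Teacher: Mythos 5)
Your reduction is sound as far as it goes: since $u_0=0$ one indeed has $Hu_1(x_0)=\int_0^1 H\dot u_t(x_0)\,dt$, and the intermediate claim $\textup{Ker}\,H\dot u_0(x_0)\subseteq\textup{Ker}\,H\dot u_t(x_0)$ for all $t$ is true (it falls out of the paper's computation), so the lemma would follow. The gap is in how you propose to prove that claim. Differentiating the kernel relation of $\omega+i\partial\overline\partial u$ in an $X$-direction along the degenerate leaf produces (this is (\ref{fol5})) a linear ODE in the leaf parameter $s$ for the first-order variation $\varphi(s)$ of the foliation, whose coefficients $u_{\overline z z s}(s,0)$, $u_{\overline z\overline z s}(s,0)$ are, up to a factor of $2i$, precisely the entries of $H\dot u_t(x_0)$. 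So an "ODE in $t$" along the vertical line has as coefficients the very Hessians whose kernel you are trying to control: asserting that "the vanishing at $t=0$ propagates" amounts to asserting that the constant function is a solution, which is equivalent to the conclusion $H\dot u_t(x_0)[\alpha,\overline\alpha]^T=0$; knowing only that the relevant derivative vanishes at $t=0$ propagates nothing. The actual mechanism in the paper is complex-analytic, not a $t$-ODE: by translation invariance the coefficients of (\ref{fol5}) restricted to the \emph{real} axis are constant in $r$ and equal to $A=i\dot u_{0,\overline z z}(0)/2$, $B=i\dot u_{0,\overline z\overline z}(0)/2$, i.e.\ they involve only the $t=0$ Hessian; for $a=\alpha$ a kernel vector the constant $\varphi\equiv\alpha$ is then the unique solution on $\Bbb R$, and since $\varphi$ (and the associated momentum $\psi$) is holomorphic on $S$ and continuous on $\overline S$, constancy on the boundary line $\textup{Im}\,s=0$ forces constancy on the whole strip by Schwarz reflection; evaluating (\ref{fol6}) at $\textup{Im}\,s=1$ gives $\overline\alpha=P\overline\alpha+Q\alpha$, i.e.\ $u_{1,z\overline z}(0)\overline\alpha+u_{1,zz}(0)\alpha=0$. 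This analytic-continuation step is the crux and is absent from your outline.

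A second, independent problem is regularity. The quantities you propose to put into your ODE, e.g.\ $\partial_v\partial_v u_{s\overline s}(s,x_0)$, are fourth-order derivatives of $u$, which do not exist for a $C^3$ solution; likewise the Riccati-type identity $\tfrac{d}{dt}H\dot u_t(x_0)=H\dot u_t(x_0)\,g_t^{-1}(x_0)\,H\dot u_t(x_0)$ that one would get by twice differentiating the Semmes form of the equation at $x_0$ (which \emph{would} give a legitimate linear homogeneous ODE for $y(t)=H\dot u_t(x_0)\alpha$ with $y(0)=0$) also costs a fourth derivative. The paper's argument is arranged precisely to avoid this: it differentiates the foliation (whose leaves depend $C^1$ on the initial point for $C^3$ solutions) rather than the equation, and never uses more than third derivatives of $u$ together with the holomorphy of $\varphi$ and $\psi$. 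So while your first step and your linear-algebra reduction are correct and consistent with the paper, the core propagation argument as written would not go through under the stated hypotheses.
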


\begin{proof} Most of the computations to follow are taken over from \cite[Section 2]{lv}. We extend the Hessian forms of both $u_1$ and $\dot u_0$ to complex bilinear forms.  Then we have the following representation of these forms at any point $x \in X$ in any complex coordinates:
\[ H{u_1}(x) = \left[ \begin{array}{cccc}
 {u_1}_{zz}(x)  & {u_1}_{z\overline{z}}(x)\\
{u_1}_{\overline{z}z}(x) & {u_1}_{\overline{z}\overline{z}}(x)\\\end{array} \right] \textup{, }  H\dot u_0(x) = \left[ \begin{array}{cccc}
 \dot {u_0}_{zz}(x)  & \dot {u_0}_{z\overline{z}}(x)\\
\dot {u_0}_{\overline{z}z}(x) & \dot {u_0}_{\overline{z}\overline{z}}(x)\\\end{array} \right],\]
where ${u_1}_{zz}(x) = \left\{{u_1}_{z_j z_k}(x)\right\}_{j,k = 1,n}$ and similarly for all the other terms.

We fix local coordinates $z_j$, $j = 1, . . . ,n$ on a neighborhood $V \subset X$ of $x_0$ such that $\omega_0 |_{x_0} = i dz_j \wedge d\overline{z}_j |_{x_0}$, the local coordinates map $V$ on a convex set in $\Bbb{C}^n$ and there exist a potential $w_0$ on $V$ such that $\omega_0 = i \partial \overline{\partial} w_0$ on $V$. We can clearly suppose that ${w_0}_{z_j z_k} |_{x_0} = 0$. We denote by $w$ the function $\pi^*_X w_0$. We identify $V$ with its image in $\Bbb{C}^n$ and $x_0$ with $0 \in \Bbb{C}^n$. Then $\overline{S} \times V$ is identified with a subset of $\overline{S} \times \Bbb{C}^n$.

From Lemma \ref{crit} it follows that $f_{x_0}=f_0 \equiv 0$ and  the leaf of the Monge Amp\`ere foliation passing through $(0,x_0)$ is $\overline{S}\times \left\{ x_0 \right\}$.  As discussed in Section 2 of \cite{lv}, this implies that for any $d >0$ and for small enough $x$, it makes sense to holomorphically extend the trajectories $r \to f_x(r)$  from the segment $[-d,d] \subset \Bbb R$ to the compact set $\left\{z\in   \overline{S}| -d \leq\textup{Re }z  \leq d\right\}$. Because of our regularity assumptions, we have that the leafs of the Monge-Amp\`ere foliation change differentiably. Hence for any $a \in \Bbb C^n$ one can define the functions $\varphi_j := {df_{(ta)}}_j/dt|_{t=0}$,  continuous on $\overline{S}$ with values in $\Bbb C$, that are holomorphic on $S$ for $j=1,\dots,n$. It follows from basic properties of the Monge-Amp\`ere foliation that whenever the maps below are defined, they are holomorphic:
$$
s \to {w}_{z_j}(s,f_{ta}(s)) + u_{z_j}(s,f_{ta}(s)), \textup{\hspace{0.2 in}}$$
$j = 1,...,n.$ Differentiating the above maps with respect to $t$ we obtain that the functions
$$
s \to \sum_{k=1}^n\Big(\left\{{w}_{z_j z_k}(s,0) + u_{z_j z_k}(s,0)\right\}\varphi_k(s)+ \left\{{w}_{z_j \overline{z}_k}(s,0) + u_{z_j \overline{z}_k}(s,0)\right\}\overline{\varphi_k(s)}\Big), \textup{\hspace{0.05 in}}$$
$j = 1,...,n, s \in S$, are holomorphic as well. From our choice of $w_0$ it follows that ${w}_{zz}(s,0) \equiv 0$ and $w_{z\overline{z}}(s,0)\equiv I$. After differentiating the above maps by $\partial/\partial \overline{s}$ as well, we find that
$$
\sum_{k=1}^n\Big(u_{z_j z_k \overline{s}}(s,0)\varphi_k(s)+ u_{z_j \overline{z}_k \overline{s}}(s,0)\overline{\varphi_k(s)} + \left\{\delta_{jk} + u_{z_j \overline{z}_k}(s,0)\right\}\overline{\varphi'_k(s)}\Big)=0, \textup{\hspace{0.2 in}} j = 1,...,n.
$$
Summing up, with the notation introduced at the beginning, we obtain that the function $\psi : S \to \Bbb C^n$ defined by
$$
\psi(s):=u_{zz}(s,0)\varphi(s)+ \left\{I + u_{z \overline{z}}(s,0)\right\}\overline{\varphi(s)},
$$
is holomorphic and
\begin{equation}\label{fol5}
\left\{I + u_{\overline{z}z}(s,0)\right\}\varphi'(s)=-u_{\overline{z}{z} s}(s,0)\varphi(s)-u_{\overline{z}\overline{z} s}(s,0)\overline{\varphi(s)}.
\end{equation}
 With the notation $P = I + u_{z\overline{z}}(i,0)=I + {u_1}_{z\overline{z}}(0)$, $Q = u_{zz}(i,0)={u_1}_{zz}(0)$, $A = - u_{\overline{z}z s}(0,0)$ and $B = -u_{\overline{z}\overline{z}s}(0,0)$ we find that
\begin{equation}\label{fol6}
\psi(z) =  \begin{cases}
 \overline{\varphi(s)} &\textup{ if Im } s = 0\\
P\overline{\varphi(s)} + Q \varphi(s) &\textup{ if  Im } s = 1\\
 \end{cases}.
 \end{equation}
Restricting (\ref{fol5}) to real $s$ we have
$$
\varphi'(s)=A\varphi(s) + B \overline{\varphi(s)}, \textup{ \hspace{0.2 in}} s \in \Bbb R .
$$
We also observe that $\varphi(0)=a$. Hence, to find the values of $\varphi$, we need to solve the following initial value problem on the real line :
$$
\left[ \begin{array}{cccc}\varphi'(s) \\ \overline{\varphi}'(s)\end{array}\right]=\left[ \begin{array}{cccc}
 A & B\\
\overline{B} & \overline{A}\\
 \end{array} \right]\left[ \begin{array}{cccc}\varphi(s) \\ \overline{\varphi}(s)\end{array}\right], \ \left[ \begin{array}{cccc}\varphi(0) \\ \overline{\varphi}(0)\end{array}\right]=\left[ \begin{array}{cccc}a \\ \overline{a}\end{array}\right]
$$
Since $A  = i \dot {u_0}_{\overline{z}z}(0)/2$, $B  = i \dot {u_0}_{\overline{z}\overline{z}}(0)/2$ and $H \dot u(0)$ is assumed to be degenerate, we obtain that the matrix of the above linear initial value problem is degenerate too. This means that there exists a nonzero $\alpha \in \Bbb C^n$ such that
$$\left[ \begin{array}{cccc}
 A \alpha  + B \overline{\alpha}\\
\overline{B}\alpha + \overline{A} \overline{\alpha}\\
 \end{array} \right] = 0
$$
Thus, for $a =  \alpha$, $\varphi(s) \equiv  \alpha$ is a solution of the initial value problem. Using ($\ref{fol6}$) and analytic continuation we obtain $\overline{\alpha} = P \overline{\alpha} + Q \alpha$. Hence ${u_1}_{z\overline{z}}(0)\overline{\alpha} + {u_1}_{zz}(0)\alpha = 0$. So the vector $[\alpha, \overline{\alpha}]^T$ is in the kernel of the Hessian of $u_1$ at $0$, therefore $x_0$ is a degenerate critical point of $u_1$. The inclusion $\textup{Ker }H\dot u_0(x_0)  \subseteq \textup{Ker }H {u_1(x_0)}$ clearly follows from our arguments.
\end{proof}
\section{Proof of Theorem \ref{noc3}}

We start out with a Morse function $w:X \to \Bbb R$ with a minimal amount of critical points and distinct critical values. The condition $X \neq \Bbb C P^1$ implies that $b_i(X) >0$ for some $0 < i < 2n$. From the Morse inequalities it follows that $v$ has at least one saddle point $x_0$. Let $p$ be the index of $w$ at $x_0$.

We choose a holomorphic coordinate chart $(z_1 = x_1 + iy_1,\ldots, z_n=x_n + i y_n)$ around the point $x_0$ such that $\omega_0|_{x_0} = \sum_{j=1}^n i dz_j \wedge d \bar z_j$. It is easy to see that one can choose a smooth diffeomorphism $\varphi$ of $X$, such that $\varphi(x_0)=x_0$ and in our fixed chart, the Hessian of the Morse function $v = w \circ \varphi$ at $x_0$  is diagonal with the following entries on the diagonal:
$$\frac{\partial^2 v}{\partial x_j^2}(x_0) = \frac{\partial^2 v}{\partial y_k^2}(x_0) = 1 \textup{ if }1 \leq j \leq n,p+1 \leq k \leq n \textup{ and }\frac{\partial^2 v}{\partial y_l^2}(x_0) = -1 \textup{ if }1 \leq l \leq p.$$

By multiplying $v$ with a small constant, we can assume that $v \in \mathcal H$. We clearly have $v_{z_1 \overline{z}_1}(x_0)=0$ and $|v_{z_1 z_1}(x_0)| \neq 0$. We can choose a number $\gamma > 0$ such that  $$|v_{z_1 z_1}(x_0)| >2 \gamma + v_{z_1 \overline{z}_1}(x_0) \textup{ and }\gamma + v_{z_1 \overline{z}_1}(x_0) > 0.$$
Using this and the fact that $v_{z \overline{z}}(x_0)$ is diagonal, by a lemma that will be provided below, one can find a smooth function $\rho$ supported inside our coordinate patch, that is identically equal to $1$ in a neighborhood of $x_0$ such that
$$\omega_0 + i \partial \overline{\partial}(\gamma-1)\rho(z)|z_1|^2 >0 \textup{ and }  (\omega_0 +  i \partial \overline{\partial}v) + i \partial \overline{\partial}(\gamma-1)\rho(z)|z_1|^2 > 0.$$

Assume that problem (\ref{bvp}) has a $C^3$ solution $u$ with boundary data $u_0 = (\gamma-1)\rho(z)|z_1|^2$ and $u_1 = v+(\gamma-1)\rho(z)|z_1|^2 $.  By Theorem \ref{main}, $\dot u_0$ is a Morse function, and all its critical point are
critical points of $v$. As $v$ has the minimal amount of critical
points, the critical sets of $\dot u_0$ and $v$ coincide, in particular $x_0$ is critical for $\dot u_0$.

Since $\textup{rank}(\omega + i\partial\bar\partial u)\equiv n$ and $\omega + i\partial\bar\partial u$ is semi-positive on the boundary of $\overline{S} \times X$, we obtain that $\omega + i\partial\bar\partial u \geq 0$, in particular $u$ is $\omega-$plurisubharmonic. It follows from this and Lemma \ref{crit} that all the conditions of Lemma 3.2 in \cite{dl} are satisfied for the K\"ahler metric $\omega' = \omega + i\partial \bar \partial (\gamma -1)\rho(z)|z_1|^2$ and the $\omega'-$plurisubharmonic function $u' = u - (\gamma -1)\rho(z)|z_1|^2$. However, the conclusion of this result is violated since for $v' = u'_1$ and $\xi' = \partial/\partial z_1$ we have:
\begin{equation}\label{viol}|\sum_{j,k}v'_{z_j z_k}(x_0)\xi'_j \xi'_k| >\sum_{j,k}(2 \omega'_{jk}\xi'_j \overline{\xi'}_k + v'_{z_j \overline{z_k}}(x_0)\xi'_j \overline{\xi'}_k).\end{equation}

To prove the last statement of the theorem, we refer to elements of  Morse theory. If one perturbs slightly $u_0$ and $u_1$ in the $C^\infty$ topology, it is a standard result that $u_1 - u_0$ is still a Morse function with distinct critical values. Also, by \cite[Theorem 2.2, p.79]{gg}, we know that for small perturbations, $u_1 - u_0$ will still have a minimal amount of critical points.  Lastly, if the perturbation is small enough, then $u_0,u_1 \in \mathcal H$, and at one of the critical points ($\ref{viol}$) still holds. This completes the main argument of the proof.

As promised, we  provide the following lemma initially found by Donaldson \cite[Lemma 8]{d2}, \cite[Lemma 3.3]{lv}. Since its original formulation is slightly inadequate for our purposes, we reformulate it here and give a proof for completeness.

\begin{lemma} Suppose $\omega^0$ and $\omega^1$ are two K\"ahler metrics on a complex manifold $X$. Let $q^0$ and $q^1$ be two real smooth functions on $X$ with critical points at $x_0$ and $q^0(x_0)= q^1(x_0)=0$. If in some holomorphic coordinate  patch $U$ around $x_0$ we have ${q^l}_{z\overline{z}}(x_0)(\xi,\overline{\xi}) + {\omega^l}(x_0)(\xi, \overline{\xi}) > 0$ for all nonzero $\xi \in T_{x_0} X$, $l=0,1$, then one can find a smooth function $\rho$ supported in $U$ that is identically $1$ in a neighborhood of $x_0$ such that $\omega^l + i \partial \overline{\partial}(\rho q^l) > 0$, $l=0,1$.
\end{lemma}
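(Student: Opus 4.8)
The plan is to take $\rho$ to be a logarithmic cut-off of the standard type. Fix holomorphic coordinates on $U$ centred at $x_0$, write $\beta:=i\sum_j dz_j\wedge d\bar z_j$ for the Euclidean form, and fix once and for all $\chi\in C^\infty(\Bbb R,[0,1])$ with $\chi\equiv 0$ on $(-\infty,1]$ and $\chi\equiv 1$ on $[2,\infty)$. For small $\delta>0$ put $\rho_\delta(z):=\chi\big(\log(1/|z|)/\log(1/\delta)\big)$ on $U$, extended by $1$ across $x_0$ and by $0$ outside $U$ (legitimate once $\delta$ is small, since then $\{|z|\le\delta\}\subset U$). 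Then $\rho_\delta$ is smooth on $X$, equals $1$ on $\{|z|\le\delta^2\}$, is supported in $\{|z|\le\delta\}$, and a direct computation — the point being that $\log|z|$ has first derivatives of size $1/|z|$ while $i\partial\bar\partial\log|z|^2$ is of size $1/|z|^2$ — gives $|\partial\rho_\delta|\le C_1/(|z|\log(1/\delta))$ and $|i\partial\bar\partial\rho_\delta|\le C_1/(|z|^2\log(1/\delta))$ with $C_1$ independent of $\delta$; moreover $\partial\rho_\delta$ and $i\partial\bar\partial\rho_\delta$ are supported in the annulus $\{\delta^2\le|z|\le\delta\}$.

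Next I would record the elementary consequences of the hypotheses on $q^l$: since $q^l$ is smooth with $q^l(x_0)=0$ and $dq^l(x_0)=0$, Taylor's theorem supplies a constant $C_0$ and a ball $B_{r_0}\subset U$ on which $|q^l(z)|\le C_0|z|^2$ and $|\partial q^l(z)|\le C_0|z|$; shrinking $r_0$, both $i\partial\bar\partial q^l$ and $\omega^l$ become, on $B_{r_0}$, as close as we please (in $\beta$-operator norm) to their values at $x_0$, for $l=0,1$. Now, for $\delta<r_0$, I expand
\[\omega^l+i\partial\bar\partial(\rho_\delta q^l)=\Big[(1-\rho_\delta)\,\omega^l+\rho_\delta\big(\omega^l+i\partial\bar\partial q^l\big)\Big]+\Big[2\,\textup{Re}\big(i\partial\rho_\delta\wedge\bar\partial q^l\big)+q^l\,i\partial\bar\partial\rho_\delta\Big].\]
The hypothesis says the Hermitian form $\omega^l(x_0)+i\partial\bar\partial q^l|_{x_0}$ is positive definite, so taking $r_0$ small makes the first bracket — a convex combination of $\omega^l$ and $\omega^l+i\partial\bar\partial q^l$ — at least $c_1\beta$ on $B_{r_0}$ for some $c_1>0$ uniform in $l$. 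The second bracket is supported in $\{\delta^2\le|z|\le\delta\}$, where combining $|q^l|\le C_0|z|^2$ and $|\partial q^l|\le C_0|z|$ with the bounds on $\rho_\delta$ shows its $\beta$-operator norm is at most $C_2/\log(1/\delta)$, with $C_2$ independent of $\delta$. Hence for $\delta$ small enough that $C_2/\log(1/\delta)<c_1$ we get $\omega^l+i\partial\bar\partial(\rho_\delta q^l)>0$ on $\{|z|<\delta\}$, while outside that ball the left side equals $\omega^l>0$; taking such a $\delta$ that works for both $l=0$ and $l=1$ and putting $\rho=\rho_\delta$ finishes the proof.

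The one genuine idea is the treatment of the second bracket. A naive cut-off supported on $\{\epsilon\le|z|\le 2\epsilon\}$ would produce error terms there of size $|q^l|\cdot|i\partial\bar\partial\rho|\sim\epsilon^2\cdot\epsilon^{-2}$ and $|\partial\rho|\cdot|\partial q^l|\sim\epsilon^{-1}\cdot\epsilon$, both $O(1)$, with no reason for $\omega^l$ to dominate them — this is exactly the obstacle. The logarithmic profile trades that $O(1)$ for an $O(1/\log(1/\delta))$, which is then absorbed into the uniform lower bound $c_1\beta$ furnished by the hypothesis. Everything else — the Taylor estimates, the continuity of $\omega^l$ and of the complex Hessian of $q^l$ at $x_0$, and the convexity observation — is routine; I expect the derivative bounds on $\rho_\delta$ and the bookkeeping that makes the final inequality hold uniformly in $l$ to be the bulk of a (short) write-up.
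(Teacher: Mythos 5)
Your proof is correct and is essentially the paper's own argument: the paper likewise uses a logarithmic cut-off $\rho(|z|^2)=\beta(\varepsilon\log|z|^2)$ with $t|\rho'(t)|,\,t^2|\rho''(t)|<C\varepsilon$ (your $\varepsilon$ being $1/\log(1/\delta)$), combines the uniform positivity of the convex combination $\omega^l+\rho\, i\partial\overline{\partial}q^l$ with the quadratic vanishing of $q^l$ at $x_0$ to make the cross and $q^l\, i\partial\overline{\partial}\rho$ terms $O(\varepsilon)\omega^l$, and then takes $\varepsilon$ small. The only cosmetic difference is that the paper rescales coordinates so the positivity $\omega^l+i\partial\overline{\partial}q^l>\alpha\omega^l$ holds on all of $U$ rather than on a small ball $B_{r_0}$.
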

\begin{proof} By shrinking $U$ and possibly rescaling the coordinates, there exists $1 > \alpha > 0$ such that $\omega^l + i \partial \overline{\partial}q^l > \alpha\omega^l$ for $l=0,1$ on $U$ and the coordinates map $x_0$ to $0$ and $U$ to a neighborhood of $\left\{ |z| \leq 2\right\} \subset \Bbb C^n$.

Now take a smooth function $\beta: [-\infty,+\infty) \to [0, 1]$ such that $\beta(t) = 0$ if $t>0$ and $\beta(t)=1$ if $t< -1$. Then $\rho(t)=\beta(\varepsilon \log t )$, $\varepsilon >0$, is supported in $[0,1]$, $\rho(t)=1$ if $t < e^{-1/\varepsilon}$ and $t|\rho'(t)|,t^2 |\rho''(t)|< C\varepsilon$ for some constant $C$. From all of this it is easy to see that on $U$ we have
$$\omega^l + i\partial\overline{\partial}(\rho(|z|^2) q^l) > \omega^l + i \rho(|z|^2)\partial\overline{\partial}q^l - D \varepsilon \omega^l > (\alpha - D\varepsilon)\omega^l$$
for some constant $D$ independent of $\varepsilon$, $l = 0,1$. Since $\rho(|z|^2) q^l$ is identically $0$ outside $U$, for small enough $\varepsilon$, $\rho(|z|^2)$ has the required properties.
\end{proof}
\vspace{0.2 in}

\vspace{0.1 in}
\textsc{Department of Mathematics, Purdue University, West Lafayette, IN\ 47907}
\emph{E-mail address: }\texttt{\textbf{tdarvas@math.purdue.edu}}

\end{document}